\newtheorem{Theorem}[equation]{Theorem}
\newtheorem{Criterion}[equation]{Criterion}
\theoremstyle{definition}
\newtheorem{Remark}[equation]{Remark}
\def\section{\def\@secnumfont{\mdseries}%
  \@startsection{section}{1}%
  \z@{.7\linespacing\@plus\linespacing}{.5\linespacing}%
  {\normalfont\scshape\centering}}
\def\subsection{\def\@secnumfont{\normalfont\bfseries}%
 \@startsection{subsection}{2}%
 \z@{.5\linespacing\@plus.7\linespacing}{-.5em}%
 {\normalfont\bfseries}}
\numberwithin{equation}{section}
\def\bdots{\mathinner{\mkern1mu\raise1pt\hbox{.}\mkern2mu\raise4pt\hbox{.}
           \mkern2mu\raise7pt\vbox{\kern7pt\hbox{.}}\mkern1mu}}
\def\Fx{F^\times}
\def\oF{{\mathfrak o}_F}
\def\oFx{{\mathfrak o}_F^\times}
\def\pF{{\mathfrak p}_F}
\def\oEx{{\mathfrak o}_E^\times}
\def\cInd{\operatorname{c-Ind}}
\def\chara{\operatorname{char}}
\def\Sp{\operatorname{Sp}}
\def\GL{\operatorname{GL}}
\def\tr{\operatorname{tr}}
\begin{document}

\title[Reducibility points and characteristic $p$ local fields I]{Reducibility points and characteristic $p$ local fields I
Simple supercuspidal representations of symplectic groups.}

\author{Corinne Blondel}
\address{CNRS -- IMJ--PRG, Universit{\'e} Paris Cit\'e, Case 7012, 75205 Paris Cedex 13, France.}
\email{corinne.blondel@imj-prg.fr}

\author{Guy Henniart}
\address{Universit{\'e} de Paris-Sud, Laboratoire de Math{\'e}matiques d'Orsay, Orsay Cedex, F-91405.}
\email{guy.henniart@universite-paris-saclay.fr}

\author{Shaun Stevens}
\address{School of Mathematics, University of East Anglia, Norwich Research Park, Norwich NR4 7TJ, United Kingdom}
\email{Shaun.Stevens@uea.ac.uk}


\begin{abstract} Let $F$ be a non-Archimedean local field with {\bf odd} characteristic $p$. Let $N$ be a positive integer and $G=\Sp_{2N}(F)$. By work of Lomelí  on $\gamma$-factors of pairs and converse theorems, a {\bf generic} supercuspidal representation $\pi$ of $G$ has a transfer to a smooth irreducible representation  $\Pi_\pi$ of $\GL_{2N+1}(F)$. In turn the Weil--Deligne representation   $\Sigma_\pi$ associated to $\Pi_\pi$  by the Langlands correspondence determines a Langlands parameter   $\phi_\pi$ for $\pi$. That process produces a Langlands correspondence for generic cuspidal representations of $G$.

In this paper we take $\pi$ to be {\bf simple} in the sense of Gross and Reeder, and from the explicit construction of $\pi$ we describe  $\Pi_\pi$ explicitly. The method we use is the same as in a previous paper, where we treated the case where $F$ is a $p$-adic field, and $\pi$  a simple supercuspidal representation of $G=\Sp_{2N}(F)$. It relies on a criterion due to M\oe glin on the reducibility of representations parabolically induced from $\GL_M(F)\times  G$ for varying positive integers $M$.  We extend this criterion to the case when $F$ has 
{\bf any}  positive characteristic.   The main new feature consists in relating reducibility to $\gamma$-factors for pairs. 
\end{abstract}

\date{\today}
\subjclass[2010]{Primary 11F70, 22E50; Secondary .}
\keywords{}
\maketitle

\section{Introduction}

The present paper is an addition to \cite{BHS-Oaxaca}. We establish here for local fields of odd characteristic $p$ what was done there for $p$-adic fields. Let us be more precise.

Let ${p}$ be 
{a prime number}, and ${F}$ a locally compact non-Archimedean field with {\bf residue characteristic  $p$}.
Let  ${N}$ be a positive integer and $G$ the locally pro-$p$ group 
$\Sp_{2N}(F)$. All our representations of $G$  and other reductive groups will be smooth representations on complex vector spaces, and a supercuspidal representation is for us irreducible.
Let $\pi$ be a supercuspidal representation of $G$.

When $\chara (F)=0$,  work of Arthur gives a transfer of $\pi$ to an irreducible representation $\Pi_\pi$ of $\GL_{2N+1}(F)$, characterized via some endoscopic and twisted endoscopic character identities. For generic $\pi$ the transfer can also be obtained by the method of Cogdell, Kim, Piatetski-Shapiro and Shahidi \cite{CKPSS}, using converse theorems and Langlands--Shahidi factors for pairs (see Appendix B in \cite{AHKO}). When 
$\chara (F)=p$,  Lomelí \cite{Lom} developed the Langlands--Shahidi method for pairs and also established the transfer for generic $\pi$.

 \begin{Theorem}
[{\cite[Chapter 7]{CKPSS}},{\cite[Proposition 9.3 and 9.4]{Lom}}]\label{Pisubpi} 

Let $\pi$ be a generic supercuspidal representation of $G=\Sp_{2N}(F)$. Then there is an irreducible generic representation  $\Pi_\pi$ of  $\GL_{2N+1}(F)$ such that, for any non-trivial character $\psi$ of $F$, any positive integer $M$ and any supercuspidal representation $\tau$ of $\GL_M(F)$, we have
$$\gamma(\pi \times \tau, s, \psi)=\gamma\left(\Pi_\pi \times \tau, s, \psi\right).$$
 The representation $\Pi_\pi$ is unique up to isomorphism.
\end{Theorem}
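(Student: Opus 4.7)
The plan is to follow the converse-theorem strategy of Cogdell--Kim--Piatetski-Shapiro--Shahidi, adapted to positive characteristic by Lomel\'i. The raw material is the Langlands--Shahidi theory of $\gamma$-factors for pairs $(\pi',\tau')$ with $\pi'$ generic on $\Sp_{2N}$ and $\tau'$ generic on $\GL_M$, together with the companion theory for pairs of $\GL$-representations: over any local function field these $\gamma$-factors satisfy multiplicativity, a functional equation, stability under highly ramified twists, and agreement with Satake parameters at unramified places.

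First I would globalize: choose a global function field $k$ and a place $v_0$ with $k_{v_0}\simeq F$, and construct a generic cuspidal automorphic representation $\pi^{\mathrm{gl}}$ of $\Sp_{2N}(\mathbb A_k)$ whose component at $v_0$ is $\pi$ and whose ramification at the remaining places is confined to a small auxiliary set $S$ and can be made arbitrarily deep there. At the places outside $\{v_0\}\cup S$ I would prescribe as candidate local target the unramified principal series of $\GL_{2N+1}(k_v)$ determined by transfer of Satake parameters, and at the places of $S$ any irreducible generic representation with the appropriate central character.

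The central step is to invoke the Cogdell--Piatetski-Shapiro converse theorem for $\GL_{2N+1}$ with twists by cuspidal automorphic representations of $\GL_M$ ($M\le 2N-1$) that are prescribed to be sufficiently ramified at $S$. The required inputs are the global functional equation for the twisted $L$-functions $L(s,\pi^{\mathrm{gl}}\times\tau^{\mathrm{gl}})$ produced by the Langlands--Shahidi method, holomorphy and boundedness in vertical strips of these $L$-functions, and stability of local $\gamma$-factors under highly ramified twists to force equality of global $\gamma$-factors on the two sides despite the ignorance at $S$. The converse theorem then produces an automorphic representation $\Pi^{\mathrm{gl}}$ of $\GL_{2N+1}(\mathbb A_k)$ coinciding with the prescribed targets outside $\{v_0\}\cup S$. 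I would set $\Pi_\pi := \Pi^{\mathrm{gl}}_{v_0}$ and, by varying the global twists so that they localize at $v_0$ to any prescribed supercuspidal $\tau$ of $\GL_M(F)$, deduce the claimed local identity of $\gamma$-factors for every $M$ and every such $\tau$.

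Uniqueness is the easier endpoint, following from the local converse theorem for $\GL_{2N+1}(F)$: two irreducible generic representations with equal $\gamma$-factors against every supercuspidal $\tau$ of $\GL_M(F)$ must be isomorphic. The principal obstacle is the globalization step -- producing a generic cuspidal automorphic representation of $\Sp_{2N}$ over a function field with the prescribed local component at $v_0$ and ramification mild enough elsewhere to feed the converse theorem -- which has no purely local substitute and is precisely the technical content supplied by Lomel\'i in positive characteristic.
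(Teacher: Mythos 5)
The paper does not prove this theorem itself; it cites it directly to CKPSS (characteristic zero) and to Lomel\'i (positive characteristic), and your proposal is a faithful summary of exactly the converse-theorem machinery those references deploy. Your sketch --- globalization of $\pi$ to a generic cuspidal automorphic representation over a function field with prescribed component at $v_0$, unramified transfer of Satake parameters outside a finite set $S$, stability of $\gamma$-factors under highly ramified twists to neutralize the unknown places in $S$, application of the Cogdell--Piatetski-Shapiro converse theorem for $\GL_{2N+1}$ with twists up to $\GL_{2N-1}$, and the local converse theorem for uniqueness --- is the same route, so there is nothing substantively different to compare. One small caution worth recording: the converse theorem, as applied, only feeds you twists up to rank $2N-1$, whereas the theorem as stated asserts the $\gamma$-factor identity for \emph{all} $M$; the extension to all $M$ rests on the structural description of $\Pi_\pi$ as an isobaric sum of self-dual orthogonal cuspidals (obtained from the transfer) together with multiplicativity, and you should make that bridge explicit rather than leaving it at ``by varying the global twists.''
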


The (class of) $\Pi_\pi$ is called the {\bf transfer} of $\pi$ to $\GL_{2N+1}(F)$.

It is also known  {\cite[Theorem 7.3]{CKPSS}},{\cite[Theorem 9.6]{Lom}}
  that $\Pi_\pi$ is parabolically induced from a tensor product of unitary supercuspidal representations $\Pi_i$ of  $\GL_{N_i}(F)$, where the sum of the $N_i$'s is $2 N+1$, and the $\Pi_i$ 's are self-dual, of orthogonal type (i.e. the Langlands--Shahidi factor 
$L( \Pi_i, \text{Sym}^2, s) $  has a pole at $s=0$), and non-isomorphic to one another. In particular $\Pi_\pi$ is self-dual. It also follows from {\it loc.cit.} that
$ \Pi_\pi^\vee=\Pi_\pi$ is the transfer of $\pi^\vee$.

Let us now assume that~{{\bf $p$ is odd} and}
 $\pi$ is {\bf simple supercuspidal} in the sense of Gross \& Reeder \cite{GR}. A simple supercuspidal representation of $G$ is generic  (see \cite{Oi})  
  so $\pi$ has a transfer $\Pi_\pi$. There is an easy construction of $\pi$ by compact induction (hence the word simple). In \cite{BHS-Oaxaca} we determined $\Pi_\pi$ explicitly from $\pi$ when $\chara (F)=0$. In fact we proved that $\Pi_\pi$ is parabolically induced from the tensor product of an explicit quadratic character $\Pi_{1}$ of $\GL_{1}({~F})={F}^\times$ and an explicit simple supercuspidal representation $\Pi_{2}$ of $\GL_{2N}(F)$. We give a completely similar description here when $\chara (F)=p$.

When $\chara (F)=0$  we used in  \cite{BHS-Oaxaca}  a criterion of M\oe glin to determine the $\Pi_i$'s. That criterion says that an irreducible unitary supercuspidal representation $\rho$ of some 
$\GL_M(F)$ is (isomorphic to) one of the $\Pi_i$'s if and only if the representation of $\Sp_{2M+2N}(F)$   parabolically induced from $\rho |\operatorname{det} |^s \otimes \pi$ reduces at some real number ${s}\ge  1$ (in fact at ${s}=1$  since $\pi$ is generic).

Our main contribution here is to show that this criterion is still valid when $\chara (F)=p$ {(whether or not~$p$ is odd)}.  Indeed in   \cite[section 4]{BHS-Oaxaca}     we determined the set of $\rho$ satisfying the reducibility property above {when~$p$ is odd}, irrespective of the characteristic of $F$, so the reducibility  criterion gives the same description of $\Pi_\pi$ when $\chara (F)=p$.  {We prove the criterion in section~\ref{sec:crit}, and a precise statement (and proof) of our Theorem are in section~\ref{sec:thm}.}

\begin{Remark}  Our result when $\chara (F)=0$ was not new, it had been obtained before by  Adrian \& Kaplan  \cite{AK} and  Oi \cite{Oi} by different methods. We believe that when $\chara (F)>0$ the result is new. In section~\ref{sec:add} we also comment on other approaches, when $\chara (F)>0$.
\end{Remark}

\bigskip 
{\bf Acknowledgements.}      The second-named author would like to thank  Wee-Teck Gan, Luis Lomelí and Hirotaka Kakuhama for communications. He would also like to thank the Department of Mathematics of UMPC Valparaíso and Luis Lomelí for the invitation which initiated the present paper. The third-named author was supported by EPSRC grant EP/V061739/1.

\section{M\oe glin's criterion and $L$-functions for pairs}\label{sec:crit}
In this section, {\bf $p$ is any prime number,}
 $\pi$ is a {\bf generic supercuspidal} representation of 
$\Sp_{2N}(F)$  and, as in the introduction, the $\Pi_i$, for ${i}=1, \ldots , {r}$, are the supercuspidal representations of 
$\GL_{M_i}(F)$   appearing in the supercuspidal support of $\Pi_\pi$. M\oe glin's criterion to determine the $\Pi_i$ is the following.

\begin{Criterion}\label{criterion}
 Let $\rho$ be a unitary supercuspidal representation of some $\GL_M(F)$. Then the following conditions are equivalent:

\begin{enumerate}
\item $\rho$ is isomorphic to one of the $\Pi_i$.

\item  The representation $I(\rho \times \pi, s)$ of $\Sp_{2 M+2 N}(F)$ parabolically induced from $\rho |\operatorname{det} |^s \otimes \pi$ is reducible at some real number ${s}_{0}\ge  1$.
\end{enumerate} 
\end{Criterion}

When $\operatorname{char}({F})=0$, see \cite{Moeglin}. We prove below that criterion when $\operatorname{char}({F})>0$.

\begin{Remark}  We shall show that in (ii) reducibility occurs at ${s}_{0}=1$.
\end{Remark}

\begin{Remark}  To be specific about condition (ii) we embed the product 
$\GL_M(F) \times \Sp_{2N}(F)$  as a Levi subgroup $L$ of $H=\Sp_{2M+2N}(F) $  by sending $(g,h)$ to the block-diagonal matrix with blocks\footnote{With the symplectic form used in \cite{BHS-Oaxaca} $g^T$ is the transpose of $g$ with respect to the antidiagonal; in general $g \mapsto g^T$ is the unique anti-involution of ${\mathbb M}_M (F)$ such that the block-diagonal matrix belongs to 
$H$.}  ${g} , {h}$ and $({g}^{T})^{ -1}$, and we choose the upper triangular parabolic subgroup ${P}$ of ${H}$ with Levi subgroup $L$.
The parabolic induction we use is the normalized one.
\end{Remark}

 \begin{proof}[Proof of Criterion~\ref{criterion}]
Reducibility of ${I}(\rho \times \pi, s)$ is governed by Harish-Chandra's $\mu$-function $\mu(s, \rho\otimes\pi, \psi)$, as recalled in Lemma 7.6 of \cite{GL}, which gathers results of Harish-Chandra and Silberger valid for $F$ of any characteristic.
We deduce that ${I}(\rho \times \pi, s)$ is irreducible for any real ${s}$ if $\rho$ is not self-dual, and that if $\rho$ is self-dual, there is a unique ${s}_{0}\ge 0$ where ${I}(\rho \times \pi, s_0)$ reduces. Also ${s}_{0}=0$ if and only if $\mu(0, \rho\otimes\pi, \psi)$ is non-zero, and if $\mu(0, \rho\otimes\pi, \psi)=0$, then $s_{0}>0$ is the only pole of $\mu(s, \rho\otimes\pi, \psi)$ for $s\ge 0$. If $\rho$ is not self-dual, conditions (i) and (ii) are not satisfied, so we assume $\rho$ self-dual.

Since we assume $\pi$ generic, $\mu(s, \rho\otimes\pi, \psi)$ can be expressed as 
$$\gamma(\rho \times \pi^\vee, s, \psi) \gamma\left(\rho^\vee \times \pi,-s, \bar\psi \right) \gamma\left(\rho, 2 s, \Lambda^{2}, \psi\right) \gamma\left(\rho^\vee,-2 s, \Lambda^{2}, \bar\psi \right)$$  
(up to a non-zero constant), where the last two factors are Langlands--Shahidi factors for exterior square. To determine $s_{0}$ in terms of those $\gamma$-factors, there are two ways: either we follow the pattern of Theorem 8.1 of \cite{Shahidi90}, which, when applied to the classical group $H$ with its Levi subgroup $L$ implies the criterion when $\operatorname{char}({F})=0$, or as in \cite{GL} we take advantage of the work of Laurent or Vincent Lafforgue to write all $\gamma$-factors as factors attached to Weil--Deligne representations, and work on the Weil group side.
We choose to follow Shahidi here, because Lomelí  has  already verified most of the ingredients Shahidi uses. More specifically,  by   {\cite[bottom of page 4314]{Lom}},  
Propositions~7.2 and~7.4, Lemma~7.5 and Corollary~7.6 of \cite{Shahidi90} are valid when $\operatorname{char}({F})>0$. In terms of the notation in {\it loc.cit.} $\gamma(\rho \times \pi^\vee, s, \psi)$ is Shahidi's $\gamma_{1}(\rho\otimes\pi, s, \psi)$, and $\gamma\left(\rho, 2 s, \Lambda^{2}, \psi\right)$ is Shahidi's $\gamma_{2}(\rho\otimes\pi, 2s, \psi)$, which does not depend on $\pi$.

Write $\gamma(\rho \times\pi^\vee, s, \psi)=\varepsilon(\rho \times \pi^\vee, s, \psi) L(\rho^\vee \times \pi  , 1-s) / L(\rho \times\pi^\vee, s)$ where the $L$-factors are the Langlands--Shahidi factors (which do not depend on $\psi)$. 
 They are obtained by writing   $\gamma(\rho \times \pi^\vee, s, \psi)$  as 
$e\left(q^{ -s}\right) P\left(q^{-s}\right) / Q\left(q^{-s}\right)$, where $P$ and $Q$ are  two coprime polynomials in one variable which take the value $1$ at $0$, and $e$ is a monomial.  Then  one puts   ${L}(\rho \times \pi^\vee, {s})=1 / {P}\left({q}^{-{s}}\right)$ and $\varepsilon(\rho \times \pi^\vee, s, \psi)={e}\left({q}^{-{s}}\right)$.

Similarly write 
$\gamma\left(\rho, 2 s, \Lambda^{2}, \psi\right)=\varepsilon\left(\rho, 2 s, \Lambda^{2}, \psi\right) {L}\left(\rho^\vee, 1-{s}, \Lambda^{2}\right) / {L}\left(\rho, \Lambda^{2}, {s}\right)$ by expressing $\gamma\left(\rho, 2 {s}, \Lambda^{2},\psi\right)=e_{2}\left(q^{-s}\right) P_{2}\left(q^{-s}\right) / Q_{2}\left(q^{-s}\right)$ where $P_{2}$ and $Q_{2}$ are two coprime polynomials in one variable which take the value $1$ at $0$, and $e_{2}$ is a monomial. Proposition 7.3 of \cite{Shahidi90}  says that all roots of $P$ and ${P}_{2}$ have absolute value $1$ (and consequently all roots of ${Q}$ and ${Q}_{2}$ have absolute value ${q}$), and Corollary 7.6 of {\it loc.cit.}  implies that the following conditions are equivalent:
\begin{enumerate}[label=\alph*)] 
\item $\mu(0, \rho\otimes\pi, \psi)=0$ (that is, $s_{0}>0$ ).

\item We have either ${P}(0)=0$ or ${P}_{2}(0)=0$ but not both.
\end{enumerate} 

Let us examine in turn the two conditions ${P}(0)=0$ and ${P}_{2}(0)=0$.

Since $\Pi_\pi^\vee$ is the transfer of $\pi^\vee$, we have 
$$\gamma(\rho \times \pi^\vee, s, \psi)=\gamma\left(\rho \times  \Pi_\pi^\vee, s, \psi\right)$$ and that is the product over $i$ of the $\gamma\left(\rho \times \left(\Pi_i\right)^\vee , s,  \psi  \right)$, and similarly for the $\varepsilon$ and $L$-factors. Moreover it is known that all those factors for $\rho \times\left(\Pi_{i}\right)^\vee$ can also be obtained by the Rankin--Selberg method
\cite{HL2013}. In particular 
$L(  \rho \times  (  \Pi_i)^\vee, s)$  has a pole at $s=0$ if and only if $\rho$ is isomorphic to $\Pi_i$, so $\gamma(\rho \times \pi^\vee, 0, \psi)$
is $0$, equivalently ${P}(0)=0$, if and only if $\rho$ is isomorphic to one of the $\Pi_i$'s. 

On the other hand ${P}_{2}(0)=0$ means that ${L}\left(\rho, \Lambda^{2}, {s}\right)$ has a pole at ${s}=0$, i.e. $\rho$ is of symplectic type. Then ${P}(0)$ is not 
$0$, so that $\rho$ is isomorphic to none of the $\Pi_i$   (which have orthogonal type), which in turn implies that the factor $\gamma(\rho \times \pi^\vee, s, \psi)$ has no zero nor pole for $s \ge 0$, and the same is true of the factor 
$\gamma\left(\rho^\vee \times \pi,-s, \bar\psi\right)$. On the other  hand   the only pole of   $\gamma\left(\rho, 2 s, \Lambda^{2}, \psi\right)$ for $s \ge 0$ is at $s=1 / 2$, and the same is true of $\gamma\left(\rho^\vee,-2 s, \Lambda^{2}, \bar\psi\right)$.

We deduce that if $s_{0}>0$, then either $\rho$ is isomorphic to one of the $\Pi_i$, in which case $s_{0}=1$, or $\rho$ is isomorphic to none of the $\Pi_i$, in which case $s_{0}=1 / 2$.

That proves the implication (ii) implies (i) in the criterion, and the fact that $s_{0}=1$ if (ii) is satisfied. On the other hand if (i) is satisfied, then clearly $\rho$ is self-dual of orthogonal type (because the $\Pi_i$  are) and in particular ${P}_{2}(0)$ is not zero since $\rho$ cannot be of symplectic type, and the above analysis implies that (ii) holds with ${s}_{0}=1$.
\end{proof}

\section{The Theorem}\label{sec:thm}
From now on (in this section and the next), we assume that~{\bf $p$ is odd}. 

To state the result properly we need more notation. We denote by $\oF$ the ring of integers of $F$, by $\pF$  its maximal ideal, and by $q$ the cardinality of the residual field 
$\oF/\pF$. We fix a character $\Psi$ of $F$ non-trivial on  $\oF$ and trivial on 
$\pF$. For  a given $\beta \in M_{2N}(F)$ we denote by $\Psi_\beta$ the function on $  \GL_{2N}(F)$ defined by 
$\Psi_\beta(X)=\Psi \circ \tr (\beta (X-1))$. We use it on restriction to relevant subgroups 
of $  \GL_{2N}(F)$, depending on $\beta$, on which it is a character.

 We let   $\delta$ be  the (non-trivial) quadratic character of $\oFx$. 
 The Gauss sum $\sum_{u \in k_F^\times} \delta(u)  \Psi( u  )$ is  known to have  
modulus $q^{\frac 1 2}$ and to satisfy 
$\xi(\delta, \Psi)^2=(-1)^{\frac{q-1}{2}}$  where we have put
\begin{equation}\label{Gauss}
\xi(\delta, \psi)= \frac{\sum_{u \in k_F^\times} \delta(u)  \Psi( u  )}{|\sum_{u \in k_F^\times} \delta(u)  \Psi( u  )|}  ,   \text{ a fourth root of unity}. 
\end{equation}

We described in \cite[\S 2 and \S 4.7]{BHS-Oaxaca} the 
simple supercuspidal representations of $G=\Sp_{2N}(F)$, according to the definition in \cite{GR}, 
in our notation (a  description already given in   
\cite{AK} and \cite{Oi}). We recall this briefly.  

We fix an Iwahori subgroup $\tilde I$ of $\GL_{2N}(F)$ 
which is stable under the involution whose fixed points are  $G$,   and the two first steps of its Moy-Prasad filtration $\tilde I(1)$ and $\tilde I(2)$. Intersecting with $G$ we obtain  an Iwahori subgroup $I$ of $G$ and the two first steps $I(1)$ and $I(2)$  of its Moy--Prasad filtration. 

We let $E$ be a totally   ramified extension  of $F$ of degree $2N$ normalizing  $I (1)$ and $\beta^{-1}$ be a uniformizer of $E$. Then $\Psi_\beta$ is a character of 
$I(1)$ trivial on $I(2)$ and for any character $\chi$ of the center $Z$ of $G$, isomorphic to $\{\pm 1\}$,    the compactly induced representation $\cInd_{Z I_{2N}(1)}^G \chi \otimes \Psi_\beta$ is irreducible hence supercuspidal. By definition \cite{GR}, such a representation is called a simple supercuspidal representation.

When $F$ has characteristic $0$, we proved   in  \cite[Theorem 4.6]{BHS-Oaxaca} 
the Theorem that follows. Here we establish the same statement
when $\chara (F)=p$.

\begin{Theorem}\label{JordanSummary} Let $\pi$ be a simple supercuspidal representation of $G$, written as 
 $$\pi=\cInd_{Z I (1)}^G \chi \otimes \Psi_\beta,$$ where 
 $\chi$ is a character of  the center $Z \simeq\{\pm 1\}$ of $G$ and $\beta^{-1}$ 
is  a uniformizer of a totally ramified extension $E$ of $F$ of degree $2N$ normalizing  $I (1)$.\\
The supercuspidal support  of $\Pi_\pi$ is $(\GL_1(F)\times \GL_{2N}(F), 
\Pi_1 \otimes \Pi_2)$
where  
\begin{itemize} 
\item   $\Pi_1$ is the ramified quadratic  character of $\Fx$ characterized by 
$$\Pi_1(   N_{E/F}(\beta))=   (-1)^{(N+1)\frac{q-1}{2}} ; $$
\item    $\Pi_2$ is  the  
 supercuspidal representation of $\GL(2N,F)$  defined by 
 $$
\Pi_2= \cInd_{E^\times \tilde I (1)}^{\GL(2N,F)}  \tau_\beta\otimes \Psi_{2\beta}  
$$
where $(\tau_\beta)_{|\oEx} $ is the quadratic character of $\oEx$ and 
$$\tau_\beta(\beta) =\chi(-1)\delta(2)  \xi(\delta, \Psi ).$$
\end{itemize}
\end{Theorem}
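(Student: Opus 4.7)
The plan is to reduce to the argument of \cite[Theorem 4.6]{BHS-Oaxaca}, which established the same statement when $\chara(F)=0$, by checking that all its ingredients remain available in positive characteristic. The proof in loc.\ cit.\ rests on three pillars, and only one of them is sensitive to $\chara(F)$.

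First, I would invoke the general structure theorem for the transfer recalled in the introduction: $\Pi_\pi$ is parabolically induced from a tensor product $\Pi_{i_1}\otimes\cdots\otimes\Pi_{i_r}$ of pairwise non-isomorphic self-dual unitary supercuspidal representations of orthogonal type, with $\sum M_{i_j}=2N+1$. This holds by \cite[Theorem~9.6]{Lom} in any characteristic, so the supercuspidal support of $\Pi_\pi$ is pinned down by the unordered set $\{\Pi_{i_j}\}$.

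Next, I would apply Criterion~\ref{criterion}, which we have just proved to hold for $\chara(F)=p$: the $\Pi_{i_j}$ are precisely the unitary supercuspidal representations $\rho$ of some $\GL_M(F)$ for which $I(\rho\times\pi,s)$ reduces at some $s_0\ge 1$, and in fact at $s_0=1$. It then remains to identify this set, and this is exactly what is done in \cite[\S 4]{BHS-Oaxaca}: that section classifies the unitary supercuspidal $\rho$ for which $I(\rho\times\pi,s)$ reduces at $s=1$, with $\pi$ simple supercuspidal, and the output is exactly the pair $\{\Pi_1,\Pi_2\}$ appearing in the statement, with the explicit formulas for $\Pi_1(N_{E/F}(\beta))$ and $\tau_\beta(\beta)$ given in terms of the Gauss sum $\xi(\delta,\Psi)$. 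Since $M_1+M_2=1+2N=2N+1$ and $\Pi_1$, $\Pi_2$ are certainly non-isomorphic (they live on different $\GL_M$'s), this set must coincide with $\{\Pi_{i_j}\}$, which is the desired conclusion.

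The main (and really only) obstacle is to make sure that the computations of \cite[\S 4]{BHS-Oaxaca} genuinely transfer to $\chara(F)=p$: the construction of a cover of $\pi$, the determination of the corresponding Hecke algebra, and the evaluation of the Hecke parameter via a Gauss-sum computation on the residue field. These arguments are local and type-theoretic, take place entirely on compact open subgroups and at the residue-field level, and only require $p$ odd (to handle the quadratic character $\delta$ and the factor $\delta(2)$); in particular they are insensitive to whether $F$ has characteristic $0$ or $p$. The authors emphasise this in the remark preceding Section~\ref{sec:crit} (``we determined the set of $\rho$ \dots\ irrespective of the characteristic of~$F$''), so once Criterion~\ref{criterion} is available in positive characteristic the proof of Theorem~\ref{JordanSummary} is essentially automatic.
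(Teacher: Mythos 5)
Your proposal is correct and follows essentially the same route as the paper's proof: invoke Criterion~\ref{criterion} (now established for $\chara(F)=p$), use the characteristic-independent computations of \cite[\S 4]{BHS-Oaxaca} to show $\Pi_1,\Pi_2$ satisfy the reducibility condition, and conclude by the dimension count $1+2N=2N+1$. The only cosmetic difference is that the paper stresses it needs only the implication (ii)$\Rightarrow$(i) applied to $\Pi_1,\Pi_2$ followed by the count, rather than the full classification of all $\rho$ satisfying (ii), but this does not change the substance of the argument.
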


\begin{proof}
In   \cite[Theorem 4.16]{BHS-Oaxaca}     we proved that the representations $\Pi_{1}$ and $\Pi_{2}$ 
satisfy condition (ii) of Criterion~\ref{criterion}, with reducibility at ${s}_{0}=1$, hence also condition (i). Since $\Pi_{1}$ is a representation of $\GL_{1}({F})$ and $\Pi_{2}$ a representation of $\GL_{2N}(F)$, while $\Pi_\pi$ is a representation of $\GL_{2N+1}(F)$, we see that $\Pi_{1}$ and $\Pi_{2}$ are the only representations in the supercuspidal support of $\Pi_\pi$. 
\end{proof}

Note that we only used the implication (ii) implies (i) in Criterion~\ref{criterion}.

\section{Other approaches and possible extensions}\label{sec:add}
In  \cite[5.2 and 5.3]{BHS-Oaxaca}   we gave another proof of our main theorem when $\operatorname{char}({F})=0$, which uses \cite{BHS} but not the more involved computations of \cite[\S 3 \& \S 4]{BHS-Oaxaca}. But we needed the supplementary information given by a result of Lapid, saying that for a generic 
 supercuspidal  representation $\pi$ of $G$ with central character $\chi$, we have $\varepsilon(\pi, 1 / 2, \psi)=\chi(-1)$. Lapid's paper \cite{La} is written with the hypothesis that $\operatorname{char}(F)=0$, and similarly for the more general result of Lapid \& Rallis on non-generic representations, though they say that the hypothesis is only for convenience (Lapid--Rallis \cite{LR} when $\operatorname{char}(F)=0$, see recent work of Kakuhama \cite{Kakuhama}  when $\operatorname{char}(F)=p$).   Taking that result for granted\footnote{H. Kakuhama tells us that the main point would be to prove Lemma 13 of Lapid--Rallis when $\operatorname{char}(F)=p$.},  
 the proof of \cite[5.2 and 5.3]{BHS-Oaxaca} goes through when $\operatorname{char}(F)=p$. On the other hand, by \cite[4.8]{BHS-Oaxaca},  our Theorem implies in turn the identity
$\varepsilon(\pi, 1 / 2, \psi)=\chi(-1)$, when $\pi$ is simple supercuspidal.

As mentioned in the introduction, when $\operatorname{char}({F})=0$, Adrian and Kaplan, and also Oi, had proved a result equivalent to our main Theorem, though in different notation.

Adrian and Kaplan \cite{AK} used \cite{BHS}, which determined $\Pi_{1}$ and $\Pi_{2}$ up to twist by an unramified character, together with some explicit computations of $\gamma$-factors $\gamma(\omega \times \pi, s, \psi)$ for quadratic characters $\omega$ of $F^{\times}={GL}_{1}({~F})$. Their paper is written with the hypothesis that $\operatorname{char}({F})=0$, but we presume that their computations can be adapted when $\operatorname{char}({F})>0$.

Oi \cite{Oi}   worked directly with the character identities of endoscopy and twisted endoscopy characterizing Arthur's transfer. The present state of the (twisted) trace formula does not allow to derive Arthur's results when $\operatorname{char}({F})>0$ (see recent progress on the trace formula in \cite{LL} and \cite{Le}).

But Ganapathy and Varma (\cite{Ga}, \cite{GV}) used Kazhdan's theory of close local fields to transport Arthur's transfer, for split classical groups, from $\operatorname{char}({F})=0$ to  $\operatorname{char}({F})>0$. To deduce our Theorem one would need to show that both the construction of simple 
 supercuspidals  and the answer given by our Theorem 
are compatible with that transport. That should not be hard, but we do not embark on that, because our main point was to show that the arguments of \cite{BHS},  \cite{BHS-Oaxaca} are enough to give the result when $\operatorname{char}({F})>0$. 
Note however that Lapid’s result mentioned above can be easily transfered from $\operatorname{char}({F})=0$ to $\operatorname{char}({F})>0$ by the work of \cite{Ga}. One can probably also prove in this way the extension (by Lapid and Rallis \cite{LR}) of Lapid’s result to not necessarily generic representations, using that Kakuhama \cite{Kakuhama}  has established the doubling method for  $\operatorname{char}({F})>0$, but the behaviour of factors obtained via the doubling method through close local fields has not been studied yet, and probably it is easier to work directly in Kakuhama’s framework.

Simple  supercuspidals  exist for other (tame) classical groups, and when $\operatorname{char}({F})=0$ their transfer has been described explicitly (see the references in \cite{AHKO}, and also \cite{Tam}).
When $\operatorname{char}({F})>0$ that remains to be done, although the use of close local fields, for example, seems promising.

On the other hand, when $\operatorname{char}({F})>0$, Gan and Lomelí \cite{GL}, using the work of Vincent Lafforgue, construct a transfer $\Pi_\pi$ to 
$\GL_{2N+1}(F)$   for any  supercuspidal  representation $\pi$ of 
$\Sp_{2N}(F)$, not necessarily generic. However they work under an assumption (working hypothesis), which they do not verify: when $\operatorname{char}({F})=0$ the working hypothesis is true by work of Savin \cite{Sa2008}. 
In future work we plan to prove that hypothesis when  $\operatorname{char}({F})=p$, and consequently show that the results of \cite{BHS} allow to compute the supercuspidal support of $\Pi_\pi$ up to unramified character twists, and that similarly the results of \cite{LS}   hold when  $\operatorname{char}({F})=p$.

\end{document}